\newtheorem{thm}{Theorem}[section]
\newtheorem{cor}[thm]{Corollary}
\newtheorem{lem}[thm]{Lemma}
\newtheorem{prop}[thm]{Proposition}
\theoremstyle{definition}
\theoremstyle{remark}
\numberwithin{equation}{section}
\title{On the equation $ab(ab-1)-na=\Delta^2$}%
\author{Sadegh Nazardonyavi}%
\date{}%
\begin{document}
\maketitle


\begin{abstract}
Let $n$ be a positive integer. We study the diophantine equation $ab(ab-1)-na=\Delta^2$, where $a,b$ are positive integers. We also show that if a system of two congruences is soluble, then an equation which is a translation of Erd\H{o}s-Straus conjecture is soluble.
\end{abstract}
2010 AMS Subject Classification: {11Dxx, 11Axx, 11Nxx}\\
Keywords: {Diophantine equation, Quadratic residue, Congruence equation}
\section{Introduction}
Let $m>1$ be an integer, and suppose $(a,m)=1$. Then $a$ is called a quadratic residue of $m$ if $x^2\equiv a\pmod m$ has a solution, otherwise $a$ is called a quadratic nonresidue of $m$.
If $p$ is an odd prime and $(a,p)=1$, define the Legendre symbol $\left(\frac ap\right)$ (or $(a/p)$) to be $1$ if $a$ is a quadratic residue of $p$ and $-1$ if $a$ is a quadratic nonresidue of $p$. The Jacobi symbol is defined for all odd numbers; if $m=\prod p_i^{k_i}$ and $(a,m)=1$ then $\left(\frac am\right)=\prod\left(\frac a{p_i}\right)^{k_i}$, where $\left(\frac a{p_i}\right)$ are Legendre symbols. Note that if $m$ is not prime, then $\left(\frac am\right)=1$ does not necessarily imply that $a$ is a quadratic residue of $m$; see   \cite[ch. 5]{Adler-1995}, however the Legendre and Jacobi symbols share many properties, for instance:
\begin{thm}\cite{Adler-1995}\label{prop: legendre-jacobi}
Let $m$ and $n$ be odd positive integers.
\begin{enumerate}
  \item If $(a,m) = 1$ and $a\equiv b \pmod m$, then $(a/m) = (b/m)$.
  \item If $(a,m) = (b,m) = 1$, then $(ab/m) = (a/m) (b/m)$. In particular, $(a^2/m)=1$.
  \item $(-1/m)=1$ if and only if $m\equiv1\pmod4$.
  \item $(2/m)=1$ if and only if $m\equiv\pm1\pmod8$.
\end{enumerate}
\end{thm}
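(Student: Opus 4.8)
The plan is to reduce every assertion to the corresponding statement for the Legendre symbol and then propagate it through the multiplicative definition $\left(\frac am\right)=\prod\left(\frac a{p_i}\right)^{k_i}$ recalled above. At the level of a single odd prime $p$ the basic tool is Euler's criterion, $\left(\frac ap\right)\equiv a^{(p-1)/2}\pmod p$, together with the second supplement $\left(\frac2p\right)=(-1)^{(p^2-1)/8}$; I would treat both as known facts about Legendre symbols, since the content here is really the transfer to composite $m$. The only genuine work is showing that the relevant quantities behave well under the factorization of $m$.

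For part (1), if $a\equiv b\pmod m$ and $p_i\mid m$ then $a\equiv b\pmod{p_i}$, so $x^2\equiv a$ and $x^2\equiv b$ have the same solubility modulo $p_i$; hence $\left(\frac a{p_i}\right)=\left(\frac b{p_i}\right)$ for every $i$, and taking the product yields $\left(\frac am\right)=\left(\frac bm\right)$. For part (2), multiplicativity at a prime follows from Euler's criterion via $(ab)^{(p-1)/2}=a^{(p-1)/2}b^{(p-1)/2}$, so $\left(\frac{ab}{p}\right)=\left(\frac ap\right)\left(\frac bp\right)$ once one notes both sides are $\pm1$ and $p>2$; the product definition then gives the claim for $m$, and $\left(\frac{a^2}m\right)=\left(\frac am\right)^2=1$ is immediate.

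Parts (3) and (4) are where the main obstacle lies: one must compare an exponent summed over the prime factorization with a single arithmetic condition on $m$ modulo $4$ or $8$. Writing $m=\prod p_i^{k_i}$, Euler's criterion gives $\left(\frac{-1}m\right)=(-1)^{\sum k_i(p_i-1)/2}$ and the supplement gives $\left(\frac2m\right)=(-1)^{\sum k_i(p_i^2-1)/8}$. The key lemmas are the two congruences, valid for odd $r,s$,
\[
\frac{rs-1}2\equiv\frac{r-1}2+\frac{s-1}2\pmod2,\qquad \frac{r^2s^2-1}8\equiv\frac{r^2-1}8+\frac{s^2-1}8\pmod2,
\]
which follow by expanding $rs-1=(r-1)+(s-1)+(r-1)(s-1)$ and $r^2s^2-1=(r^2-1)+(s^2-1)+(r^2-1)(s^2-1)$ and observing that $4\mid(r-1)(s-1)$ and $64\mid(r^2-1)(s^2-1)$ for odd $r,s$. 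These say that $r\mapsto(r-1)/2$ and $r\mapsto(r^2-1)/8$ are additive modulo $2$ on odd integers, so by induction on the number of prime factors (with multiplicity) the two exponents above are congruent modulo $2$ to $(m-1)/2$ and $(m^2-1)/8$ respectively. It then remains only to check the elementary equivalences that $(m-1)/2$ is even iff $m\equiv1\pmod4$, and $(m^2-1)/8$ is even iff $m\equiv\pm1\pmod8$, which finishes (3) and (4).
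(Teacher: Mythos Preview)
Your argument is correct and entirely standard. Note, however, that the paper does not supply its own proof of this theorem: it is stated with a citation to \cite{Adler-1995} and used as background. So there is no ``paper's proof'' to compare against; your write-up simply fills in a proof the paper omits by reference, via the usual route of reducing to Euler's criterion and the second supplement at each prime and then using the additivity modulo~$2$ of $r\mapsto(r-1)/2$ and $r\mapsto(r^2-1)/8$ on odd integers to pass to composite~$m$.
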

Also one of the most important and interesting results in number theory which is also a common property of these two symbols is Law of Quadratic Reciprocity (LQR), cf. \cite[ch. 5]{Adler-1995}. It states that if $(m,n)=1$ are odd positive integers, then
\begin{enumerate}
  \item[] $\left(\frac mn\right) = \left(\frac nm\right)$ if at least one of $m$ and $n$ is of the form $4k+1$, and
  \item[] $\left(\frac mn\right) = -\left(\frac nm\right)$ if both $m$ and $n$ are of the form $4k + 3$.
\end{enumerate}
In this note we study some of the elementary properties of the equation
\begin{equation}\label{ab(ab-1)-na=Delta2}
ab(ab-1)-na=\Delta^2,\qquad 1\leq b<n,
\end{equation}
where $n$ is a given positive integer. If $\Delta=0$, then it is easily seen that the equation is soluble for all $n>1$. Therefore throughout the paper we assume $\Delta>0$ unless otherwise indicated.
\begin{prop}\label{prop: ab(ab-1)-na=Delta2}
Let $n>1$ be a positive integer. Then there exist positive integers $a,b,\Delta$ such that  $ab(ab-1)-na=\Delta^2$
is soluble.
\end{prop}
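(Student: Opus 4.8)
The plan is to factor the left-hand side. Since $ab(ab-1)-na=a\bigl(ab^{2}-b-n\bigr)$, equation \eqref{ab(ab-1)-na=Delta2} is equivalent to $\Delta^{2}=a\bigl(ab^{2}-b-n\bigr)$, and the task reduces to choosing positive integers $a$ and $b$ that turn the right-hand side into a perfect square. The most transparent way to do this is to force the second factor to be $a$ times a square, say $ab^{2}-b-n=ac^{2}$ for some nonnegative integer $c$, because then $\Delta^{2}=a^{2}c^{2}$ and one may simply take $\Delta=ac$.

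First I would analyse the constraint $ab^{2}-b-n=ac^{2}$, which rearranges to $a(b^{2}-c^{2})=b+n$. The cleanest specialisation is $c=b-1$, reducing it to $a(2b-1)=b+n$; thus $a=(b+n)/(2b-1)$ is forced, and it is a positive integer exactly when $2b-1\mid b+n$. Because $2(b+n)=(2b-1)+(2n+1)$ and $2b-1$ is odd, this divisibility is equivalent to the single condition $2b-1\mid 2n+1$. Hence every odd divisor $d=2b-1$ of $2n+1$ yields a candidate solution $b=(d+1)/2$, $a=(b+n)/(2b-1)$, $\Delta=a(b-1)$, and the only thing left to guarantee is $\Delta>0$, i.e.\ $b\ge 2$, i.e.\ $d\ge 3$.

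To finish, I would exhibit a divisor that always works: $d=2n+1$ divides $2n+1$, giving $b=n+1$, $a=1$ and $\Delta=n$, with $d=2n+1\ge 5$ since $n>1$, so $\Delta=n>0$. A direct substitution $(n+1)n-n=n^{2}$ then confirms solubility for every $n>1$. The step I expect to require the most care is bookkeeping rather than depth: checking integrality of $a$ and positivity of $\Delta$ simultaneously, and isolating the degenerate case $\Delta=0$. I would also remark that choosing a smaller odd divisor $d\ge 3$ of $2n+1$ (when one exists, i.e.\ when $2n+1$ is composite) produces further solutions with the smaller value $b=(d+1)/2$; the genuinely restrictive demand $b<n$ from \eqref{ab(ab-1)-na=Delta2} can fail only when $2n+1$ is prime, which is why the bare existence statement is phrased without that upper bound.
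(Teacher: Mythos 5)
Your argument is correct: the factorization $ab(ab-1)-na=a(ab^2-b-n)$ is right, the reduction of $2b-1\mid b+n$ to $2b-1\mid 2n+1$ is sound, and the divisor $d=2n+1$ produces the valid witness $a=1$, $b=n+1$, $\Delta=n$, which is exactly the witness the paper opens its own proof with. The difference lies in what happens after that. You force the second factor to be $a$ times a square, i.e.\ $ab^2-b-n=ac^2$ with $c=b-1$, which, as you yourself note, cannot in general respect the restriction $1\leq b<n$ when $2n+1$ is prime. The paper instead continues by making each factor of $a(ab^2-b-n)$ a perfect square separately: it takes $a$ itself to be a square with $b$ small, e.g.\ for $n=2k$ it sets $a=(k+1)^2$, $b=1$, so that $ab^2-b-n=k^2$ and $\Delta=k(k+1)$, with analogous choices for $n\equiv1\pmod4$ and $n\equiv3\pmod4$. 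That casework buys solutions with $b\in\{1,2\}$ (hence $b<n$ for $n>3$), and the $n\equiv3\pmod4$ case with $b=1\equiv1\pmod4$ is explicitly invoked later in the paper; your divisor family, while it yields additional solutions whenever $2n+1$ is composite, does not recover those. For the proposition as literally stated, your proof is complete.
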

\begin{proof}
If we let $a=1$, $b=n+1$ and $\Delta=n$, it is done. Now we restrict $1\leq b<n$ as in (\ref{ab(ab-1)-na=Delta2}) and suppose $n>3$ is given. If $n=2k$ then put $a=(k+1)^2$, $b=1$ and $\Delta=k(k+1)$. If $n=4k+1$ then let $a=(k+1)^2$, $b=2$ and $\Delta=(k+1)(2k+1)$. If $n=4k+3$ then let $a=(k+2)^2$, $b=1$ and $\Delta=k(k+2)$.
%
%
%
\end{proof}
When $n$ is a prime, in particular if $n\equiv1\pmod4$, then solutions of (\ref{ab(ab-1)-na=Delta2}) have interesting properties. Proposition \ref{prop: ab(ab-1)-na=Delta2} shows that the equation is soluble, however if we impose some extra condition on variable $b$, i.e. assuming $b\equiv3\pmod4$, then it will be equivalent to (the difficult part of) Erd\H{o}s-Straus conjecture, where obviously it is not proved here.
Here we study some properties of the solutions of (\ref{ab(ab-1)-na=Delta2}).
\begin{prop}\label{prop: ab(ab-1)-pa=Delta2,a nonzero mod p}
Let $p$ be a prime number and there are positive integers $a,b,\Delta$ such that
\begin{equation}\label{ab(ab-1)-pa=Delta2}
ab(ab-1)-pa=\Delta^2.
\end{equation}
If $p|a$, then $p=2$.
\end{prop}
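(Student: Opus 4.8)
The plan is to strip the equation down to a product of two positive factors and then exploit that coprime factors of a perfect square are themselves squares. First I would rewrite (\ref{ab(ab-1)-pa=Delta2}) as
\[
\Delta^2=ab(ab-1)-pa=a\bigl(ab^2-b-p\bigr),
\]
and note that both factors are positive (since $\Delta>0$ forces $ab^2-b-p>0$). Setting $g=\gcd(a,\,ab^2-b-p)$ and writing $a=g\alpha$, $ab^2-b-p=g\beta$ with $\gcd(\alpha,\beta)=1$, the identity $\Delta^2=g^2\alpha\beta$ shows $g^2\mid\Delta^2$, hence $g\mid\Delta$ and $(\Delta/g)^2=\alpha\beta$. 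Because $\alpha,\beta$ are coprime positive integers whose product is a square, each is a square, say $\alpha=s^2$ and $\beta=r^2$ with $s,r\ge 1$.

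Next I would eliminate to a single Pell-type relation. Substituting $a=gs^2$ into $ab^2-b-p=gr^2$ gives $gs^2b^2-b-p=gr^2$, that is
\[
b+p=g\bigl(s^2b^2-r^2\bigr)=g(sb-r)(sb+r).
\]
Since $s^2b^2-r^2=(b+p)/g>0$ we have $sb>r$, so $sb-r\ge 1$, while $r\ge 1$ gives $sb+r\ge sb+1$. This compact relation is the engine of the argument: the left side is only linear in $b$ and $p$, whereas the right side is forced to be large as soon as $p$ divides one of the factors $g$ or $s$.

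The hypothesis $p\mid a=gs^2$ now splits into $p\mid g$ or $p\mid s$, and I expect the branch $p\mid s$ to carry the real content. If $p\mid g$ then $g\ge p$ and the relation yields $b+p\ge p(sb+1)\ge p(b+1)=pb+p$, whence $1\ge p$, impossible for any prime (so this branch never occurs). If $p\mid s$ then $s\ge p$, so $b+p\ge sb+1\ge pb+1$, giving $b(p-1)\le p-1$ and therefore $b=1$. With $b=1$ the relation becomes $p+1=g(s-r)(s+r)$; since $s\ge p$ and $r\ge1$ we have $s+r\ge p+1$ and $s-r\ge1$, forcing equality throughout, i.e. $g=1$, $s-r=1$, $s+r=p+1$. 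Adding the last two equations gives $2s=p+2$, which has no integer solution when $p$ is odd (while for $p=2$ it is satisfied by $s=2$). The delicate point is precisely this last step: extracting $b=1$ from the size bound and then reading off the parity obstruction $2s=p+2$ is exactly what rules out odd primes and isolates $p=2$.
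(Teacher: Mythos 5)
Your argument is correct and complete, but it reaches the decisive factorization of $b+p$ by a genuinely different route from the paper. The paper descends through the divisibility chain $p\mid a\Rightarrow p\mid\Delta\Rightarrow p\mid a'\Rightarrow\cdots$ to conclude $a=p^{2k}$ and $\Delta=p^{k}\Delta'$ with $(p,\Delta')=1$, then divides out $p^{2k}$ to get $p+b=(p^{k}b-\Delta')(p^{k}b+\Delta')$ and finishes with the size bound $p^{k}b+\Delta'\le p+b$. You instead apply the coprime-factors-of-a-square lemma to $\Delta^2=a(ab^2-b-p)$, writing $a=gs^2$ and $ab^2-b-p=gr^2$, which yields the analogous identity $b+p=g(sb-r)(sb+r)$ without ever needing $a$ to be a power of $p$; the hypothesis $p\mid a$ enters only at the end through $p\mid gs^2$, split into the cases $p\mid g$ and $p\mid s$. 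The endgames are parallel (one factor already exceeds $pb$, forcing $b=1$ and then a size/parity obstruction), but your decomposition is the more robust one: it sidesteps the paper's glossed-over induction (the step ``continuing this procedure we get $a=p^{t}$'' tacitly assumes the $p$-free part of $a$ vanishes), and it transparently handles an $a$ with prime factors other than $p$. The cost is a slightly longer setup; what it buys is rigor, and a proof that visibly does not depend on any restriction such as $b<p$.
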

\begin{proof}
Suppose that $p|a$, then $a=pa'$. Hence $\Delta=p\Delta'$. From this we get $a'=pa''$ and
$(pa''b)^2-a''(p+b)=\Delta'^2$. Continuing this procedure we get $a=p^t$ for some $t>0$.
Also we deduce that if $\Delta^2=p^{2k}\Delta'^2$, where $k>0$ and $(p,\Delta')=1$, then $t=2k$. Therefore,
$$
p+b=(p^kb-\Delta')(p^kb+\Delta').
$$
If $k>1$, then we arrive at $b\leq p/(p^k-1)<1$ which is contradiction. Let $k=1$. Then $pb+\Delta'\leq p+b$. From here we get
$$
p\leq\frac{b}{b-1}\leq2.
$$
\end{proof}
\begin{prop}
Let $p$ be a prime number and there are positive integers $a,b$ and $\Delta\geq0$ such that
\begin{equation}\label{ab(ab-1)-pa=Delta2.}
ab(ab-1)-pa=\Delta^2.
\end{equation}
If $p|b$, then $p=2$.
\end{prop}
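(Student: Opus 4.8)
The plan is to carry out a descent on the variable $b$ analogous to the proof of Proposition \ref{prop: ab(ab-1)-pa=Delta2,a nonzero mod p}, and then to reduce the residual case back to that proposition. First I would dispose of the degenerate case $\Delta=0$: there (\ref{ab(ab-1)-pa=Delta2.}) reads $b(ab-1)=p$, and writing $b=pb_1$ (legitimate since $p\mid b$) gives $b_1(ab-1)=1$, which forces $b_1=1$ and $ab=2$; as $b=p$ this yields $ap=2$ and hence $p=2$. From that point on I would assume $\Delta>0$.

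Next I would substitute $b=pb_1$ into (\ref{ab(ab-1)-pa=Delta2.}), obtaining $pa\big(b_1(apb_1-1)-1\big)=\Delta^2$. This shows $p\mid\Delta^2$, hence $p\mid\Delta$; setting $\Delta=p\Delta'$ with $\Delta'\geq1$ and cancelling one factor of $p$ leaves
$$a\big(apb_1^2-b_1-1\big)=p\Delta'^2.$$
Reducing this modulo $p$ gives $p\mid a(b_1+1)$, which splits the argument into two cases. In the case $p\mid a$ the hypotheses of Proposition \ref{prop: ab(ab-1)-pa=Delta2,a nonzero mod p} are satisfied (recall $\Delta>0$ here), so that proposition immediately delivers $p=2$.

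The substantive case is $p\nmid a$, where necessarily $p\mid(b_1+1)$; I would write $b_1+1=pc$ with $c\geq1$. Then $apb_1^2-b_1-1=apb_1^2-(b_1+1)=p(ab_1^2-c)$, and after cancelling $p$ the displayed equation becomes $a(ab_1^2-c)=\Delta'^2$, that is,
$$(ab_1-\Delta')(ab_1+\Delta')=ac.$$
Since $ac\geq a\geq1>0$ and $ab_1+\Delta'>0$, the factor $ab_1-\Delta'$ is a positive integer, so $ac\geq ab_1+\Delta'>ab_1$, forcing $c>b_1$. On the other hand $c=(b_1+1)/p\leq(b_1+1)/2\leq b_1$ for every $b_1\geq1$, a contradiction. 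Hence $p\nmid a$ is impossible, and in every case $p=2$.

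I expect the main obstacle to be locating the clean substitution $b_1+1=pc$, which is exactly what collapses the awkward factor $apb_1^2-b_1-1$ into $p(ab_1^2-c)$ and reveals the difference-of-squares identity; once that identity is in hand, the single size comparison $c>b_1$ against $c\leq b_1$ is the only delicate estimate and finishes the proof.
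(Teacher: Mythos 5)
Your proof is correct and rests on the same engine as the paper's: the substitution $b=pb'$, the forced factorization $\Delta=p\Delta'$, and the identity $a(b'+1)=p(ab'-\Delta')(ab'+\Delta')$ combined with the observation that $ab'-\Delta'\geq 1$ makes the right-hand side too large. The paper runs this size comparison in a single stroke (deducing $b'(p-1)+p\Delta'/a\leq 1$, hence $b'=1$, $p=2$, $\Delta'=0$) and so does not need your separate treatment of $\Delta=0$ or the case split on $p\mid a$ via Proposition \ref{prop: ab(ab-1)-pa=Delta2,a nonzero mod p}, but those detours are harmless.
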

\begin{proof}
Let $b=pb'$, then by (\ref{ab(ab-1)-pa=Delta2.}) we have $\Delta=p\Delta'$. Also from (\ref{ab(ab-1)-pa=Delta2.}) we deduce $\Delta'\leq ab'-1$. Hence we have
$$
a(b'+1)=p(ab'-\Delta')(ab'+\Delta')\geq p(ab'+\Delta').
$$
So
$$
b'(p-1)+p\Delta'/a\leq1
$$
Since $p$ is prime, we must have $b'=1$, $p=2$ and $\Delta'=0$, and replacing in the equation (\ref{ab(ab-1)-pa=Delta2.}) we get $a=1$.
\end{proof}
The proof of the following result is almost similar to that of the proposition above.
\begin{prop}\label{prop: ab(ab-1)-pa=Delta2,Delta nonzero mod p}
Let $p>1$ be a prime number and there are positive integers $a,b,\Delta$ such that
\begin{equation}\label{ab(ab-1)-pa=Delta2}
ab(ab-1)-pa=\Delta^2,\qquad ,\ 1\leq b<p.
\end{equation}
If $p|\Delta$, then $p=2$.
\end{prop}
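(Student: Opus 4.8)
The plan is to suppose $p\mid\Delta$, write $\Delta=p\Delta'$ with $\Delta'\geq1$ (legitimate since $\Delta>0$), and first locate the prime factor $p$. Rewriting (\ref{ab(ab-1)-pa=Delta2}) as $ab(ab-1)=p(a+p\Delta'^2)$ shows $p\mid ab(ab-1)$; since $\gcd(ab,ab-1)=1$ and the hypothesis $1\leq b<p$ rules out $p\mid b$, exactly one of two cases can occur: either $p\mid a$, or $p\mid(ab-1)$.

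If $p\mid a$, I would conclude immediately from Proposition~\ref{prop: ab(ab-1)-pa=Delta2,a nonzero mod p}, which gives $p=2$. The substance of the proof is to exclude the alternative $ab\equiv1\pmod p$, and this is where the argument runs parallel to the preceding proposition. Writing $ab=1+pc$, substituting into the equation and dividing by $p$, I expect to arrive at
$$
a-c=p(c-\Delta')(c+\Delta').
$$
A direct computation using $1\leq b<p$ gives $a-c=(a(p-b)+1)/p>0$, so the right-hand side is positive; as $c+\Delta'>0$ this forces $c-\Delta'\geq1$, hence $c\geq\Delta'+1\geq2$ and $a-c\geq p(c+\Delta')$. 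Combining this with the trivial bound $a\leq ab=1+pc$ (valid because $b\geq1$) collapses the inequality to $c+p\Delta'\leq1$, which is absurd for $c\geq2$, $\Delta'\geq1$ and $p\geq2$. Therefore this case cannot occur, only $p\mid a$ remains, and $p=2$ follows.

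The step I expect to be the main obstacle is the case distinction itself: unlike the hypotheses $p\mid a$ and $p\mid b$, the assumption $p\mid\Delta$ gives no direct indication of which of $ab$ or $ab-1$ absorbs the factor $p$, so the congruence $ab(ab-1)\equiv0\pmod p$ is the pivot of the whole argument. Once the case $p\mid a$ is delegated to the earlier proposition, the only delicate point is pinning down the sign of $c-\Delta'$ in the case $ab\equiv1\pmod p$; it is exactly the restriction $b<p$, through $a>c$, that makes this sign positive and produces the contradiction.
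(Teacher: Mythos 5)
Your proof is correct. Note that the paper does not actually write out a proof of this proposition; it only remarks that the argument is ``almost similar'' to the preceding one (the case $p\mid b$), so you are filling a genuine omission rather than matching an explicit text. Your argument is a sound realization and is arguably more careful than that remark suggests: the case distinction $p\mid a$ versus $p\mid(ab-1)$, forced by $p\mid ab(ab-1)$ together with $p\nmid b$, really is necessary and has no counterpart in the $p\mid b$ proof being imitated. Delegating the first case to Proposition~\ref{prop: ab(ab-1)-pa=Delta2,a nonzero mod p} is legitimate, since that proposition needs no hypothesis on $b$. In the second case your identity $a-c=p(c-\Delta')(c+\Delta')$ is exactly the (sign-reversed) identity the paper uses in its subsequent proposition on $p\mid ab-1$, and the chain $1+pc\geq a\geq c+p(c+\Delta')$, hence $c+p\Delta'\leq 1$, is valid and yields the contradiction. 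You also correctly isolate the one place where the hypothesis $b<p$ enters, namely $a-c=(a(p-b)+1)/p>0$; this is essential, because $a=1$, $b=p+1$, $\Delta=p$ satisfies the equation with $p\mid\Delta$ for every prime $p$, so the proposition would be false without the restriction $b<p$.
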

\begin{prop}
Let $p$ be a prime number and there are positive integers $a,b,\Delta$, such that
$$
ab(ab-1)-pa=\Delta^2,\qquad p|ab-1.
$$
  If $1\leq a\leq p$, $b\geq1$ then (the only solution is) $a=1$ and $b=p+1$.
\end{prop}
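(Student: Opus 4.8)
The plan is to convert the divisibility hypothesis $p\mid ab-1$ into a substitution. Writing $ab-1=pk$ with $k\ge 0$, I would first dispose of the case $k=0$, which forces $ab=1$, hence $a=b=1$, and then the equation reads $-p=\Delta^2<0$, an impossibility; thus $k\ge 1$. Substituting $ab=pk+1$ into the given equation turns it into $p\bigl(pk^2+k-a\bigr)=\Delta^2$, from which $p\mid\Delta^2$ and therefore $p\mid\Delta$. Writing $\Delta=p\Delta'$ with $\Delta'\ge 1$ and cancelling one factor of $p$ yields the key identity
$$a=p(k^2-\Delta'^2)+k=p(k-\Delta')(k+\Delta')+k.$$
This exhibits $a$ as $k$ plus a multiple of $p$, and reduces the whole problem to locating $a$ in the narrow window allowed by the hypothesis $1\le a\le p$.

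Next I would observe that in fact $a\neq p$: if $a=p$ then $p\mid ab$, contradicting $ab\equiv 1\pmod p$. Together with $1\le a\le p$ this gives $0<a<p$, and the identity above then forces a three-way comparison of $\Delta'$ with $k$. If $\Delta'<k$, then $k-\Delta'\ge 1$ and $k+\Delta'\ge k+1$, so $a\ge p(k+1)+k>p$, a contradiction. If $\Delta'>k$, then $\Delta'\ge k+1$ gives $k^2-\Delta'^2\le-(2k+1)$, so $a\le k-p(2k+1)<0$ for every $p\ge 2$ and $k\ge 1$, again impossible. Hence the only surviving case is $\Delta'=k$, which collapses the identity to $a=k$.

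Finally, from $ab=pk+1$ and $a=k$ I obtain $b=p+1/k$, so integrality of $b$ forces $k=1$, whence $a=1$ and $b=p+1$ (and $\Delta=p$), and a direct substitution confirms this is a genuine solution. All the computations are elementary; the step requiring the most care—and the closest thing to an obstacle—is the bounding in the two excluded subcases, where one must verify that the inequalities $a>p$ and $a<0$ persist for \emph{every} admissible prime $p\ge 2$ and every $k\ge 1$. The enabling remark throughout is that $a=p$ is incompatible with $p\mid ab-1$, since it is precisely this exclusion that opens the tight window $0<a<p$ on which the argument turns.
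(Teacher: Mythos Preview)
Your proof is correct and follows essentially the same approach as the paper: both write $ab-1=pk$, deduce $p\mid\Delta$, and obtain the identity $a=p(k-\Delta')(k+\Delta')+k$ (equivalently the paper's $c-a=p(\Delta'-c)(\Delta'+c)$), then finish by a short case analysis using the constraint $a\le p$. The only cosmetic difference is that the paper first records the bound $\Delta\le ab-1$ (hence $\Delta'\le c$) before splitting on $c=a$ versus $c<a$, whereas you split directly on the trichotomy $\Delta'\lessgtr k$; both organizations reach the same contradictions.
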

\begin{proof}
Let $ab-1=pc$. Since $p$ is prime, from $ab(ab-1)-pa=\Delta^2$ we have $\Delta=p\Delta'$. Also from $ab(ab-1)-pa=\Delta^2$ we get that $\Delta\leq ab-1$, and so $\Delta'\leq c$. Now from
\begin{equation}\label{c-a=p(Delta'-c)(Delta+c)}
c-a=p(\Delta'-c)(\Delta+c)
\end{equation}
we have $c\leq a$. If $c=a$ we deduce $a=1$, $b=p+1$ and $\Delta=p$. If $c<a$, then by (\ref{c-a=p(Delta'-c)(Delta+c)}) and since $\Delta',\ c$ are positive, we have $c-a\leq-3p$ that leads to $b\leq -2p$ which is contradiction to the assumption $b\geq1$.
\end{proof}
Let $p>2$ be a prime number. By propositions \ref{prop: ab(ab-1)-pa=Delta2,a nonzero mod p}, \ref{prop: ab(ab-1)-pa=Delta2,Delta nonzero mod p} and Theorem \ref{prop: legendre-jacobi}
$$
\left(\frac{ab}p\right)\left(\frac{ab-1}p\right)=\left(\frac{\Delta^2}p\right)=1
$$
Therefore
$$
\left(\frac{ab}p\right)=\left(\frac{ab-1}p\right)=-1\qquad\text{or}\qquad \left(\frac{ab}p\right)=\left(\frac{ab-1}p\right)=1
$$
Before we continue, we give a lemma which is useful in studying some of the properties for the solutions of (\ref{ab(ab-1)-pa=Delta2}).
Consider the following equation:
\begin{equation}\label{(4x-p)yz=x(y+z), erdos-straus equivalent}
(4x-p)yz=x(y+z),
\end{equation}
where $p$ is a given prime and $x,y,z$ are positive integers \footnote{Note that it is one form of the Erd\H{o}s-Straus equation
$$
\frac4p=\frac1x+\frac1{py}+\frac1{pz}.
$$}.
Since $4x-p=x(1/y+1/z)\leq2x$ we have
\begin{equation}\label{x<p/2}
x\leq p/2.
\end{equation}
\begin{lem}\label{lem: (4x-p)yz=x(y+z)}
Let $p\geq2$ be a prime and there are positive integers $x,y,z$ such that
\begin{equation}\label{(4x-p)yz=x(y+z)}
(4x-p)yz=x(y+z).
\end{equation}
Then $yz\not\equiv0\pmod p$.
\end{lem}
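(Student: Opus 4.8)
The plan is to argue by contradiction. Suppose $yz\equiv0\pmod p$; I will show that the severe size restriction the equation places on $x$ makes this impossible. The first thing to record is that $x$ is coprime to $p$. Since $y,z$ are positive, the equation forces $4x-p=x(1/y+1/z)>0$, so $p/4<x$; together with the bound $x\le p/2$ established just above, this gives $1\le x<p$, hence $(x,p)=1$. Note also that $4x-p$ is a positive integer, so $4x-p\ge1$.

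Next I would reduce the equation modulo $p$. Using $4x-p\equiv4x\pmod p$ we obtain $4x\,yz\equiv x(y+z)\pmod p$, and since $(x,p)=1$ we may cancel $x$ to get $4yz\equiv y+z\pmod p$. Now if $p\mid y$, the left-hand side vanishes and the congruence reduces to $z\equiv0\pmod p$; the symmetric conclusion holds if $p\mid z$. Thus the assumption $yz\equiv0\pmod p$ forces $p\mid y$ and $p\mid z$ \emph{simultaneously}.

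I would then write $y=pu$ and $z=pw$ with $u,w\ge1$. Substituting and cancelling one power of $p$ turns the equation into $(4x-p)p\,uw=x(u+w)$. The endgame is a squeeze: from $u,w\ge1$ one has $u+w\le2uw$, so $(4x-p)p\,uw=x(u+w)\le2x\,uw$, giving $(4x-p)p\le2x$. On the other hand $x\le p/2$ yields $2x\le p$, and $4x-p\ge1$ yields $p\le(4x-p)p$. Chaining these, $(4x-p)p\le2x\le p\le(4x-p)p$, so equality holds throughout: in particular $2x=p$ and $4x-p=1$. But these two relations together give $2x=1$, which is impossible for a positive integer $x$. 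This contradiction establishes $yz\not\equiv0\pmod p$.

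The step I expect to be the main obstacle is the passage from $yz\equiv0\pmod p$ to the simultaneous divisibilities $p\mid y$ and $p\mid z$, since the clean final squeeze depends on the substitution $y=pu,\ z=pw$ with both factors extracted. The modular reduction to $4yz\equiv y+z\pmod p$ is exactly what makes this passage work, and the small prime $p=2$ needs no separate treatment because it is covered uniformly by the same inequality chain.
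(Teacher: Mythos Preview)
Your proof is correct and follows essentially the same route as the paper: show that $p\mid yz$ forces $p\mid y$ and $p\mid z$ simultaneously, substitute $y=pu$, $z=pw$, and derive a contradiction from the resulting equation $(4x-p)p\,uw=x(u+w)$ together with the bound $x\le p/2$.

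Two points of comparison are worth recording. First, the paper simply asserts ``if $z=pz'$ then we must have $y=py'$,'' whereas you justify this cleanly via the reduction $4yz\equiv y+z\pmod p$ after establishing $(x,p)=1$; your argument here is more explicit. Second, the paper's endgame splits into the cases $p=2$ and $p>2$, using a parity argument (the left side of $(4x-p)p=2x$ is odd) for the latter; your squeeze $(4x-p)p\le 2x\le p\le (4x-p)p$ handles all primes uniformly and is tidier. Neither difference is substantive, but your version avoids the case split and fills the one gap the paper leaves implicit.
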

\begin{proof}
Suppose that $z\geq y$. If for instance $z=pz'$ then we must have $y=py'$.

\begin{equation}\label{(4x-p)py'z'=x(y'+z')<p/2(y'+z')}
(4x-p)py'z'=x(y'+z')\leq p/2(y'+z').
\end{equation}
If $p=2$, then by (\ref{x<p/2}) the only choice for $x$ is being $x=1$. Then we have $y'=z'=0$, which is impossible since $0=py'=y\geq1$. Now assume $p>2$. The minimum of the left-hand side of (\ref{(4x-p)py'z'=x(y'+z')<p/2(y'+z')}) occurs if $4x-p=1$. Then $y'z'<2/3(y'+z')$ in which holds only if $y'=z'=1$. Therefore $(4x-p)p=2x$ which is impossible since the left-hand side is odd.
\end{proof}
Let $n$ be a positive integer. By Theorem \ref{prop: legendre-jacobi} and LQR one may deduce
\begin{equation}\label{(n/2n+-1)=+-1}
\left(\frac n{4n\pm1}\right)=1.
\end{equation}
\begin{thm}\label{prop (yz/p)=(y+z/p)=+-1}
Let $p>2$ be a prime number and $x,y,z$ be positive integers such that
$$
(4x-p)yz=x(y+z).
$$
If $p\equiv1\pmod4$, then $\left(\frac{yz}p\right)=\left(\frac{y+z}p\right)=-1$, and if $p\equiv3\pmod4$, then $\left(\frac{yz}p\right)=\left(\frac{y+z}p\right)=1$.
\end{thm}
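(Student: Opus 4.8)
The plan is to prove the equality of the two symbols first, and only then pin down their common value through its $p\bmod 4$ dependence. I would begin by reducing the equation modulo $p$. By Lemma~\ref{lem: (4x-p)yz=x(y+z)} we have $p\nmid yz$, so $y,z$ are prime to $p$ and $\left(\frac{yz}{p}\right)$ is defined; and $1\le x\le p/2<p$ by (\ref{x<p/2}) gives $p\nmid x$. Cancelling the invertible factor $x$ in $(4x-p)yz\equiv x(y+z)\pmod p$ produces the key congruence $4yz\equiv y+z\pmod p$. Then $y+z\equiv 4yz\not\equiv 0$, so $\left(\frac{y+z}{p}\right)$ is defined, and since $\left(\frac 4p\right)=1$ by Theorem~\ref{prop: legendre-jacobi} I obtain $\left(\frac{y+z}{p}\right)=\left(\frac{4yz}{p}\right)=\left(\frac{yz}{p}\right)$. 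This settles the equality half uniformly in $p$.

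It remains to compute the common value $\chi:=\left(\frac{yz}{p}\right)$, and here I would note that both claimed answers are captured by the single formula $\chi=-\left(\frac{-1}{p}\right)$, since $\left(\frac{-1}{p}\right)=1$ iff $p\equiv1\pmod4$. Rewriting the key congruence as $z(4y-1)\equiv y$ (with $4y\not\equiv1$, else $y\equiv0$) gives $yz\equiv y^2(4y-1)^{-1}$, so $\chi=\left(\frac{4y-1}{p}\right)$. As $4y-1\equiv3\pmod4$, the Law of Quadratic Reciprocity yields $\left(\frac{4y-1}{p}\right)=\left(\frac{-1}{p}\right)\left(\frac{p}{4y-1}\right)$; the factor $\left(\frac{-1}{p}\right)$ is precisely the desired $p\bmod4$ dependence, so the theorem reduces to the one clean identity $\left(\frac{p}{4y-1}\right)=-1$. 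To evaluate it I introduce the positive integer $w':=(4yz-y-z)/p$ and record the exact factorization $(4y-1)(4z-1)=4(4yz-y-z)+1=4pw'+1$. Reducing modulo $4y-1$ gives $4pw'\equiv-1$, and with $\left(\frac{-1}{4y-1}\right)=-1$, $\left(\frac{4}{4y-1}\right)=1$ this gives $\left(\frac{p}{4y-1}\right)=-\left(\frac{w'}{4y-1}\right)$. Everything thus hinges on showing $\left(\frac{w'}{4y-1}\right)=1$.

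This last evaluation is the crux and the main obstacle, because the mod-$p$ congruence alone does \emph{not} fix the sign: there exist residue pairs solving $4yz\equiv y+z$ (e.g. $y\equiv z\equiv3$ for $p=5$) with $\left(\frac{4y-1}{p}\right)=+1$, none of which lift to an integer solution. Hence the global integrality must enter. I would use that $y,z$ are the integer roots of $T^2-(4x-p)w'T+xw'$, i.e. the upgraded exact relations $yz=xw'$ and $y+z=(4x-p)w'$, forcing the discriminant $w'\big[(4x-p)^2w'-4x\big]=(y-z)^2$ to be a perfect square. Writing $g=\gcd(w',4x)$, $w'=gb$, $4x=ga$ with $\gcd(a,b)=1$, coprimality makes $b$ a perfect square (all factors are prime to $4y-1$ since $4pw'\equiv-1$ forces $\gcd(w',4y-1)=1$), whence $\left(\frac{w'}{4y-1}\right)=\left(\frac{g}{4y-1}\right)$.

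To finish, note $g\mid(y-z)$ and $g\mid w'\mid(y+z)$, so $g\mid2y$. If $g$ is odd then $g\mid y$, $4y-1\equiv-1\pmod g$, and reciprocity gives $\left(\frac{g}{4y-1}\right)=\left(\frac{-1}{g}\right)^2=1$. If $g$ is even then $2\mid w'$, so $yz=xw'$ is even, while $2\mid g\mid(y-z)$ forces $y\equiv z\pmod2$; hence $y$ is even, giving $\left(\frac{2}{4y-1}\right)=(-1)^y=1$, and the odd part $h\mid y$ again yields $\left(\frac{h}{4y-1}\right)=1$, so $\left(\frac{g}{4y-1}\right)=1$. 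The degenerate case $y=z$ is checked directly: there $2y\equiv1\pmod p$ gives $\chi=1$, and one verifies $p\equiv3\pmod4$, consistent with $\chi=-\left(\frac{-1}{p}\right)$. The evaluation $\left(\frac{y}{4y-1}\right)=1$ in (\ref{(n/2n+-1)=+-1}) is the guiding special case that makes the sign in the final step transparent.
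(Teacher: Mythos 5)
Your proof is correct, but it takes a genuinely different route from the paper's, and the difference is worth recording. Both arguments share the same skeleton: reduce modulo $p$ to get $\left(\frac{yz}{p}\right)=\left(\frac{y+z}{p}\right)$, flip by quadratic reciprocity to a modulus of the form $4N-1$, and exploit the identity $\left(\frac{n}{4n-1}\right)=1$ of (\ref{(n/2n+-1)=+-1}). The paper reaches its modulus through a three-stage extraction of common factors ($d=(y,z)$, then $x=x'y'z'$, then $d=x'd'$), arriving at the identity $pd'+y'=z'(4x'y'd'-1)$ and then asserting $\left(\frac{p}{4x'y'd'-1}\right)=-1$ by citing Lemma \ref{lem: (4x-p)yz=x(y+z)} and (\ref{(n/2n+-1)=+-1}). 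You instead keep $y,z$ intact, take $N=y$, and use the exact factorization $(4y-1)(4z-1)=4pw'+1$ with $w'=yz/x$, reducing the theorem to $\left(\frac{w'}{4y-1}\right)=1$; your new ingredient is that $y,z$ are the roots of $T^2-(4x-p)w'T+xw'$, so $w'\bigl[(4x-p)^2w'-4x\bigr]=(y-z)^2$ is a perfect square, and the coprime-factor analysis of this square pins the symbol down. What your route buys is completeness at exactly the point where the paper is thinnest: the two cited facts by themselves only yield $\left(\frac{p}{4x'y'd'-1}\right)=-\left(\frac{x'}{4x'y'd'-1}\right)$ and leave $\left(\frac{x'}{4x'y'd'-1}\right)$ undetermined, and your own observation that the congruence $4yz\equiv y+z\pmod p$ admits residue classes of either sign shows that some global input beyond the mod-$p$ relation is unavoidable; the discriminant argument supplies precisely that input. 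Two small points to write out in full: $w'$ is an integer because $x(4yz-y-z)=pyz$ together with $(x,p)=1$ gives $x\mid yz$, and the degenerate case $y=z$ forces $y=(p+1)/2$, $x=(p+1)/4$, hence $p\equiv3\pmod4$, so it is consistent with (and excluded from) the $p\equiv1\pmod4$ assertion.
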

\begin{proof}
Since $(x/p)\neq0$ by (\ref{x<p/2}), using Proposition \ref{prop: legendre-jacobi} one can see that $\left(\frac{yz}p\right)=\left(\frac{y+z}p\right)$. Also by Lemma \ref{lem: (4x-p)yz=x(y+z)} it is seen that $\left(\frac{yz}p\right)\neq0$. Suppose $(y,z)=d$. Then $y=dy'$ and $z=dz'$. It is easily seen that since $(y',z')=1$ then $(y'z',y'+z')=1$.
The equation becomes
$
(4x-p)y'z'd=x(y'+z')
$
hence $y'z'|x$, there exists $x'$ such that $x=x'y'z'$. Then
$
pd+x'y'=x'z'(4y'd-1).
$
Since $p$ is prime from the above equation we have $d=x'd'$. Therefore
\begin{equation}\label{pd'+y'=z'(4x'y'd'-1)}
pd'+y'=z'(4x'y'd'-1)
\end{equation}
and
$
y'\equiv z'(4x'y'd'-1)\pmod p
$.
Thus
\begin{equation}\label{y'z'/p=(4x'y'd'-1)/p}
\left(\frac{y'z'}{p}\right)=\left(\frac{4x'y'd'-1}{p}\right)
\end{equation}
From (\ref{pd'+y'=z'(4x'y'd'-1)}) we have $pd'\equiv-y'\mod(4x'y'd'-1)$. Thus
\begin{equation}\label{pd'/(4x'y'd'-1)=-y'/4x'y'd'-1}
\left(\frac{pd'}{4x'y'd'-1}\right)=\left(\frac{-y'}{4x'y'd'-1}\right)
\end{equation}
By Lemma \ref{lem: (4x-p)yz=x(y+z)} and (\ref{(n/2n+-1)=+-1}), we have
$
\left(\frac{p}{4x'y'd'-1}\right)=-1
$.
By (\ref{y'z'/p=(4x'y'd'-1)/p}) and LQR we then deduce
$$
\left(\frac{yz}{p}\right)=\left(\frac{y'z'd^2}{p}\right)=\left(\frac{y'z'}{p}\right)=\left(\frac{4x'y'd'-1}{p}\right)=\left(\frac{p}{4x'y'd'-1}\right)=-1.
$$
By a similar argument one can demonstrate the respective result for $p\equiv3\pmod4$.
\end{proof}
\begin{prop}\label{prop: ab(ab-1)-pa=Delta2,p=1,b=3(mod4)}
Let $p\equiv 1(\mod4)$ be a prime number. If there are positive integers $a,b,\Delta$ such that
$$
ab(ab-1)-pa=\Delta^2,\qquad ,\ b\equiv 3(\mod4)
$$
then
$$
\left(\frac{ab}{p}\right)=\left(\frac{ab-1}{p}\right)=-1.
$$
\end{prop}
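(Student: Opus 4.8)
The plan is to transform a solution of the quadratic equation into a solution of the Erd\H{o}s--Straus type equation (\ref{(4x-p)yz=x(y+z)}) and then to quote Theorem \ref{prop (yz/p)=(y+z/p)=+-1}. First I rewrite the hypothesis $ab(ab-1)-pa=\Delta^2$ as $(ab)^2-\Delta^2=a(b+p)$, that is
\begin{equation*}
(ab-\Delta)(ab+\Delta)=a(b+p).
\end{equation*}
Since $p\equiv1$ and $b\equiv3\pmod4$ we have $b+p\equiv0\pmod4$, so $x:=(b+p)/4$ is a positive integer and, crucially, $4x-p=b$. The congruence $b\equiv3\pmod4$ is used exactly here, to make $(b+p)/4$ an integer and to produce the coefficient $4x-p$.

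Next I manufacture the remaining two variables. As $(ab-\Delta)(ab+\Delta)=4ax$ is divisible by $4$, while the two factors differ by $2\Delta$ and hence share the same parity, they cannot both be odd; thus both are even. Setting
\begin{equation*}
y:=\frac{ab-\Delta}{2},\qquad z:=\frac{ab+\Delta}{2},
\end{equation*}
we obtain positive integers (positivity of $y$ comes from $(ab)^2-\Delta^2=a(b+p)>0$, forcing $ab>\Delta$) satisfying $y+z=ab$ and $yz=a(b+p)/4$. A one-line check,
\begin{equation*}
(4x-p)yz=b\cdot\frac{a(b+p)}{4}=\frac{b+p}{4}\cdot ab=x(y+z),
\end{equation*}
shows that $(x,y,z)$ solves (\ref{(4x-p)yz=x(y+z)}).

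With this solution in hand the result is immediate. Because $p\equiv1\pmod4$, Theorem \ref{prop (yz/p)=(y+z/p)=+-1} gives $\left(\frac{y+z}{p}\right)=-1$, and since $y+z=ab$ this is exactly $\left(\frac{ab}{p}\right)=-1$. Finally, the identity $\left(\frac{ab}{p}\right)=\left(\frac{ab-1}{p}\right)$ recorded just before Lemma \ref{lem: (4x-p)yz=x(y+z)} (which rests on Propositions \ref{prop: ab(ab-1)-pa=Delta2,a nonzero mod p} and \ref{prop: ab(ab-1)-pa=Delta2,Delta nonzero mod p}) then yields $\left(\frac{ab-1}{p}\right)=-1$ as well.

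The only genuinely creative step is discovering the substitution $x=(b+p)/4$, $y=(ab-\Delta)/2$, $z=(ab+\Delta)/2$; once it is written down, integrality, positivity, and the defining identity of (\ref{(4x-p)yz=x(y+z)}) are all routine, and every quadratic-reciprocity consideration is delegated to Theorem \ref{prop (yz/p)=(y+z/p)=+-1}. I would therefore expect no serious obstacle beyond locating this change of variables.
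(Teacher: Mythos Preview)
Your proof is correct and follows essentially the same route as the paper: set $x=(p+b)/4$ and pass to $y,z$ with $y+z=ab$, $yz=ax$, then invoke Theorem~\ref{prop (yz/p)=(y+z/p)=+-1}. Your version is actually a bit more careful than the paper's, since you make the choice $y,z=(ab\mp\Delta)/2$ explicit and verify integrality and positivity, whereas the paper defines $y,z$ only implicitly via the system $y+z=ab$, $yz=ax$.
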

\begin{proof}
Write the equation as
$$
(ab)^2-a(p+b)=\Delta^2.
$$
Set $p+b=4x$ and
\begin{equation}\label{ab=y+z}
ab=y+z,\qquad ax=yz.
\end{equation}
Hence we have
$$
(y+z)^2-4yz=(z-y)^2=\Delta^2.
$$
Multiplying both sides of (\ref{ab=y+z}) we have
$
(4x-p)yz=x(y+z).
$
From Theorem \ref{prop (yz/p)=(y+z/p)=+-1} we deduce
$
\left(\frac{ab}{p}\right)=\left(\frac{y+z}{p}\right)=\left(\frac{yz}{p}\right)=-1
$
\end{proof}
%
In the proof of Proposition \ref{prop: ab(ab-1)-na=Delta2} it was shown that if $p\equiv3\pmod4$ then there are positive integers $a,b,\Delta$ such that
$$
ab(ab-1)-pa=\Delta^2,\qquad ,\ b\equiv 1(\mod4).
$$
Using a similar argument to that of Proposition \ref{prop: ab(ab-1)-pa=Delta2,p=1,b=3(mod4)} one can show that
if $p\equiv3(\mod4)$ is a prime number, and if positive integers $a,b,\Delta$ are such that
$$
ab(ab-1)-pa=\Delta^2,\qquad ,\ b\equiv 1(\mod4)
$$
then
$
\left(\frac{ab}{p}\right)=\left(\frac{ab-1}{p}\right)=1.
$
In Proposition \ref{prop: ab(ab-1)-pa=Delta2,Delta nonzero mod p} we saw that if $p>2$ is prime and equation (\ref{ab(ab-1)-pa=Delta2}) is soluble, then $p\not|\Delta$. Now we consider the case $a|\Delta$. Then we have $\Delta=a\Delta'$.
\begin{cor}
Let $p\equiv 1(\mod4)$ be a prime number. If there are positive integers $a,b,\Delta'$ such that
$$
p=a(b^2-\Delta'^2)-b,\qquad 1<ab<p,\quad a\geq1, \quad b\equiv3(\mod4),\ \Delta'\geq1.
$$
Then
$$
\left(\frac{b^2-\Delta'^2}{p}\right)=-1.
$$
\end{cor}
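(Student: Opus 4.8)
The plan is to recognize this corollary as the specialization $\Delta = a\Delta'$ of the situation already treated in Proposition~\ref{prop: ab(ab-1)-pa=Delta2,p=1,b=3(mod4)}. First I would substitute $\Delta = a\Delta'$ into $ab(ab-1)-pa=\Delta^2$ and divide through by $a$, which yields exactly $p = a(b^2-\Delta'^2)-b$; conversely, starting from the hypothesis of the corollary and multiplying by $a$ recovers $ab(ab-1)-pa = (a\Delta')^2$. This identifies $\Delta := a\Delta'$ as a positive integer for which the equation of Proposition~\ref{prop: ab(ab-1)-pa=Delta2,p=1,b=3(mod4)} holds with $b\equiv 3\pmod 4$, so that proposition applies and supplies $\left(\frac{ab}{p}\right)=-1$.

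The heart of the argument is a single congruence. Rearranging the hypothesis gives $a(b^2-\Delta'^2) = p+b \equiv b \pmod p$. Taking Legendre symbols and using multiplicativity (part~2 of Theorem~\ref{prop: legendre-jacobi}), I would obtain
\[
\left(\frac{a}{p}\right)\left(\frac{b^2-\Delta'^2}{p}\right) = \left(\frac{b}{p}\right),
\]
whence, since each symbol is $\pm1$ and therefore its own inverse,
\[
\left(\frac{b^2-\Delta'^2}{p}\right) = \left(\frac{a}{p}\right)\left(\frac{b}{p}\right) = \left(\frac{ab}{p}\right) = -1,
\]
the last equality by Proposition~\ref{prop: ab(ab-1)-pa=Delta2,p=1,b=3(mod4)}.

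Before reading off these symbols I would verify that all three are nonzero modulo $p$, since the Legendre symbol requires coprimality to $p$. The constraint $1 < ab < p$ together with $a,b\geq 1$ forces $1\leq a<p$ and $1\leq b<p$, so $p\nmid a$ and $p\nmid b$; and from $a(b^2-\Delta'^2)\equiv b\not\equiv 0\pmod p$ one gets $p\nmid (b^2-\Delta'^2)$ as well. This is precisely where the hypothesis $1<ab<p$ is used, and it makes every symbol in the displays well defined.

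I do not expect a genuine obstacle here: the substantive content — the evaluation $\left(\frac{ab}{p}\right)=-1$ through the Erd\H{o}s--Straus reduction and the quadratic reciprocity computation — has already been carried out in Theorem~\ref{prop (yz/p)=(y+z/p)=+-1} and Proposition~\ref{prop: ab(ab-1)-pa=Delta2,p=1,b=3(mod4)}. The only care needed is the coprimality bookkeeping above, after which the result drops out of the congruence $a(b^2-\Delta'^2)\equiv b\pmod p$ in one line.
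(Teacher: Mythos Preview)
Your proposal is correct and matches the paper's intended approach. The paper sets up this corollary with the sentence ``Now we consider the case $a\mid\Delta$. Then we have $\Delta=a\Delta'$'' and states the result without further proof; your reduction to Proposition~\ref{prop: ab(ab-1)-pa=Delta2,p=1,b=3(mod4)} via the congruence $a(b^2-\Delta'^2)\equiv b\pmod p$ is exactly the one-line deduction the paper has in mind, and your coprimality check from $1<ab<p$ fills in the only detail needed to make the Legendre-symbol manipulation legitimate.
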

As we mentioned earlier that we do not prove
$$
ab(ab-1)-pa=\Delta^2,\qquad 1\leq a,\ 1\leq b<p,\ b\equiv3(\mod4),
$$
where $p\equiv1\pmod4$, however we show that with some changes in the coefficients of the equation it has solutions. More precisely we show this observation in the following two propositions.
\begin{prop} For $p\equiv1(\mod4)$ there always exist $a,b,d$ such that the following equation:
$$
ab(ab-1)\equiv \Delta^2(\mod pa),\qquad 1\leq a,\ 1\leq b<p,\ b\equiv3(\mod4)
$$
has solution.
\end{prop}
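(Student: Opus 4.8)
The plan is to exploit the gap between the exact equation $ab(ab-1)-pa=\Delta^2$ and the stated congruence: instead of forcing the difference $ab(ab-1)-\Delta^2$ to equal $pa$ on the nose, we only need $pa$ to divide it, and this extra slack collapses the whole problem into a single linear congruence modulo $p$, which is trivially solvable. This is exactly why the statement is unconditional, in contrast with the Erd\H{o}s--Straus case.

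First I would follow the case $a\mid\Delta$ considered just above and write $\Delta=a\Delta'$. Then
$$
ab(ab-1)-\Delta^2=a^2b^2-ab-a^2\Delta'^2=a\bigl(a(b^2-\Delta'^2)-b\bigr),
$$
so that $pa\mid ab(ab-1)-\Delta^2$ holds \emph{if and only if} $p\mid a(b^2-\Delta'^2)-b$; equivalently
$$
a(b^2-\Delta'^2)\equiv b\pmod p .
$$
The division by $a$ is legitimate because for any nonzero $a$ one has $pa\mid aX\iff p\mid X$. Observe that this is precisely the relation $p=a(b^2-\Delta'^2)-b$ of the preceding corollary, now demanded only modulo $p$ rather than as an equality, which is the source of the simplification.

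Next I would solve this linear congruence for $a$. Take $b=3$; since $p\equiv1\pmod4$ is prime we have $p\geq5$, so $1\leq b<p$ and $b\equiv3\pmod4$ hold automatically. Choose any $\Delta'$ with $1\leq\Delta'<p$ and $\Delta'\not\equiv\pm b\pmod p$; such a residue exists because at most two classes are excluded among the $p-1\geq4$ nonzero classes. Then $b^2-\Delta'^2\not\equiv0\pmod p$, so $b^2-\Delta'^2$ is a unit modulo the prime $p$, and I would let $a$ be the least positive integer with $a\equiv b(b^2-\Delta'^2)^{-1}\pmod p$. This gives $a(b^2-\Delta'^2)\equiv b\pmod p$, hence the congruence, and setting $\Delta=a\Delta'\geq1$ produces a genuine positive solution meeting $1\leq a$, $1\leq b<p$, $b\equiv3\pmod4$.

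I do not expect any real obstacle: the entire content of the proposition is that replacing the equality by a congruence removes the difficult Diophantine constraint and leaves a linear congruence in $a$ that always has a solution. The only two points needing a line of care are the reduction $pa\mid aX\iff p\mid X$ and the existence of a $\Delta'$ making $b^2-\Delta'^2$ invertible modulo $p$, both of which are immediate once $p\geq5$.
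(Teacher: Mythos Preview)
Your proof is correct and is essentially the same approach as the paper's: both take $b=3$ and $\Delta=a\Delta'$ (the paper fixes $\Delta'=1$, i.e.\ $\Delta=a$), which reduces the congruence to the linear condition $8a\equiv 3\pmod p$; the paper then simply records the explicit solutions $a=5k+1$ when $p=8k+1$ and $a=k+1$ when $p=8k+5$, while you argue abstractly that the linear congruence is solvable.
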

\begin{proof}
If $p=8k+1$, then put $b=3$, $a=5k+1$ and $\Delta=a$. If $p=8k+5$, then put $b=3$, $a=k+1$ and $\Delta=a$.
\end{proof}

\begin{prop}
For $p\equiv1(\mod4)$ there always exist $a,b,d$ such that the following equation:
$$
(2ab-1)^2\equiv 4 d^2(\mod 4pa+1),\qquad 1\leq a,\ 1\leq b<p,\ b\equiv3(\mod4)
$$
has solution.
%
\end{prop}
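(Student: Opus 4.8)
The plan is to exploit the elementary identity
\[
(2ab-1)^2 = 4ab(ab-1)+1,
\]
valid for all integers $a,b$, together with the observation that reducing modulo $4pa+1$ collapses the quadratic condition into a single linear one. Since $4d^2=(2d)^2$, the congruence $(2ab-1)^2\equiv 4d^2\pmod{4pa+1}$ is implied by the far weaker requirement $2d\equiv 2ab-1\pmod{4pa+1}$. So it suffices to exhibit admissible $a,b$ and a positive integer $d$ solving this linear congruence, after which squaring recovers the statement. I would not attempt to transport an exact (near-)solution of $ab(ab-1)-pa=\Delta^2$ across to this modulus, because the natural choice $d=\Delta$ produces an error term of $\pm4$ modulo $4pa+1$ and need not land on a square; the linear route is cleaner.

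Concretely, I would fix $b=3$. As $p\equiv1\pmod4$ forces $p\geq5$, this meets both constraints $1\leq b<p$ and $b\equiv3\pmod4$, while $a\geq1$ stays free. It then remains to solve $2d\equiv 6a-1\pmod{4pa+1}$. The key point is a parity observation: both $6a-1$ and the modulus $4pa+1$ are odd, so their sum
\[
(6a-1)+(4pa+1)=2a(2p+3)
\]
is even. Hence $d=a(2p+3)$ is a genuine positive integer, and $2d=(6a-1)+(4pa+1)\equiv 6a-1=2ab-1\pmod{4pa+1}$. Squaring gives $4d^2\equiv(2ab-1)^2\pmod{4pa+1}$, as required. (For an arbitrary admissible $b$ the identical computation yields $d=a(b+2p)$, so no case split on the residue of $p$ modulo $8$ is needed, though one could record the cases $p=8k+1$ and $p=8k+5$ separately to parallel the previous proposition.)

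I would emphasize that there is essentially no hard step here, and that this is exactly the content worth noting: in contrast with the exact equation $ab(ab-1)-pa=\Delta^2$ under $b\equiv3\pmod4$ with $p\equiv1\pmod4$ — equivalent to the difficult direction of the Erd\H{o}s--Straus conjecture — passing to the congruence modulo $4pa+1$ dissolves the arithmetic obstruction entirely, simply because $x^2\equiv y^2$ is always solved by $x=y$. The only thing that could conceivably fail is that the forced value of $d$ not be an integer, and the parity argument above is precisely what rules this out; that integrality check is therefore the single point I would verify carefully before declaring the proof complete.
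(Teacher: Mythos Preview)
Your argument is correct. The reduction to the linear congruence $2d\equiv 2ab-1\pmod{4pa+1}$ and the parity observation that $(2ab-1)+(4pa+1)$ is even are both valid, so $d=a(b+2p)$ (in particular $d=a(2p+3)$ for $b=3$) works for every admissible $a$ and $b$.

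This is, however, a genuinely different route from the paper's. The paper writes $p=4k+1$ and exhibits the single explicit triple $a=12k+1$, $b=4k-1$, $d=30k+4$; one checks directly that $(2ab-1)^2-4d^2$ is a multiple of $4pa+1$. Notably, for this triple one does \emph{not} have $2d\equiv\pm(2ab-1)\pmod{4pa+1}$: already at $k=1$ (so $p=5$) the modulus is $261=9\cdot29$, while $2ab-1-2d=9$ and $2ab-1+2d=145=5\cdot29$, so the divisibility comes from splitting the modulus across the two factors. In other words, the paper produces a ``nontrivial'' square root of $(2ab-1)^2$ modulo $4pa+1$, whereas you produce the trivial one. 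Your approach is shorter, works for every $a\ge1$ and every admissible $b$ simultaneously, and makes transparent why the congruence version carries no arithmetic content; the paper's construction, by contrast, gives a concrete witness more in the spirit of the explicit families in the preceding proposition.
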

\begin{proof}
Let $p=4k+1$. Then set $a=12k+1$, $b=4k-1$ and $\Delta=30k+4$.
\end{proof}
%
%
Let $n>1$ be a positive integer and there exist positive integers $a,c,d,\Delta$ and $1\leq b<n$ such that
$$
\left\{
  \begin{array}{ll}
    ab(ab-1)-nac=\Delta^2,\\[15pt]
    (2ab-1)^2-(4na+1)d=4\Delta^2.
  \end{array}
\right.
$$
Then we will have $(4na+1)d-(4na)c=1$. Hence $c=1+(4na+1)t$ and $d=1+(4na)t$ for some $t\geq0$. Replacing $c$ in the first equation we get
$$
1+(4na+1)t=c=\frac{ab(ab-1)-\Delta^2}{na}<\frac{na(na-1)}{na}=na-1
$$
which yields that $t=0$, and hence $c=d=1$. Therefore we have in particular the following result.
\begin{prop}
Let $n\equiv1(\mod4)$ be a positive integer. If there are positive integers $a,b,\Delta$, where $1\leq b<n$ and $b\equiv3\pmod4$  satisfying the system of equations:
$$
\left\{
  \begin{array}{ll}
    ab(ab-1)-nac=\Delta^2,\qquad c>0 \\[15pt]
    (2ab-1)^2-(4na+1)d=4\Delta^2,\qquad d>0
  \end{array}
\right.
$$
then
$$
ab(ab-1)-na=\Delta^2.\ 
$$
\end{prop}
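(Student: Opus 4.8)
The plan is to combine the two equations so as to cancel the quadratic expression $ab(ab-1)$ and reduce the system to a single linear Diophantine equation relating $c$ and $d$; the restriction $1\leq b<n$ will then force the trivial solution $c=d=1$, which is exactly what yields the claimed equation. I would note at the outset that the congruence hypotheses $n\equiv1\pmod4$ and $b\equiv3\pmod4$ play no role in this particular reduction: they merely record the setting in which the conclusion links up with the quadratic-residue results proved earlier (Proposition \ref{prop: ab(ab-1)-pa=Delta2,p=1,b=3(mod4)}). So the argument I have in mind works for any integer $n>1$ and any $1\leq b<n$.

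First I would expand $(2ab-1)^2=4ab(ab-1)+1$ and use it to rewrite the second equation of the system as $4ab(ab-1)+1-(4na+1)d=4\Delta^2$. Multiplying the first equation by $4$ gives $4ab(ab-1)-4\Delta^2=4nac$, and substituting this into the rewritten second equation eliminates both $ab(ab-1)$ and $\Delta^2$, leaving the linear relation
$$
(4na+1)d-(4na)c=1.
$$

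Next I would solve this Diophantine equation. Since $\gcd(4na+1,\,4na)=1$ and $(c,d)=(1,1)$ is visibly a solution, the full solution set is $c=1+(4na+1)t$, $d=1+(4na)t$ for some integer $t$, and the requirement $c,d>0$ forces $t\geq0$. To pin down $t$ I would bound $c$ from above using the first equation: since $\Delta^2\geq0$,
$$
c=\frac{ab(ab-1)-\Delta^2}{na}\leq\frac{ab(ab-1)}{na},
$$
and because $b<n$ gives $ab<na$ while $x\mapsto x(x-1)$ is increasing for $x\geq1$, one obtains $ab(ab-1)<na(na-1)$ and hence the strict bound $c<na-1$.

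The decisive step is then a comparison of the two descriptions of $c$: if $t\geq1$ we would have $c=1+(4na+1)t\geq 4na+2$, which exceeds $na-1$ for every $n,a\geq1$, contradicting the bound just established. Therefore $t=0$, so $c=d=1$, and the first equation of the system collapses to $ab(ab-1)-na=\Delta^2$, as required. I expect the only point needing genuine care to be the strict inequality $c<na-1$: it is precisely here that the hypothesis $b<n$ is used, together with the monotonicity of $x(x-1)$, and the argument hinges on the elementary observation that even the smallest nontrivial parameter value $t=1$ already overshoots this bound by a wide margin, so that no case analysis on $\Delta$ is ever needed.
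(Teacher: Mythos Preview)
Your proof is correct and follows essentially the same route as the paper: derive the linear relation $(4na+1)d-(4na)c=1$ by eliminating $ab(ab-1)$ and $\Delta^2$, parametrize the positive solutions as $c=1+(4na+1)t$, $d=1+(4na)t$ with $t\geq0$, and use the bound $c<na-1$ coming from $b<n$ to force $t=0$. Your observation that the congruence conditions on $n$ and $b$ are not used in the argument is also in line with the paper, which carries out the computation for arbitrary $n>1$ before stating the proposition.
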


\section*{Acknowledgment}
To be filled later.
\bibliographystyle{plain}
\bibliography{diophantine-equation-related-to-erdos-straus-conj_ver_01-arxiv}

\vspace*{20pt}
{\textbf{Address:} School of Mathematics, Institute for Research in Fundamental Sciences (IPM), P.O. Box: 19395--5746,
Tehran, Iran;\\
Department of Mathematical Sciences, Isfahan University of Technology (IUT), Isfahan, 84156-83111, Iran\\}
\hspace*{10pt}
\textbf{E-mail:} \texttt{sdnazdi@yahoo.com}\\
\hspace*{14pt}\textbf{Phone:} \texttt{+98(31)3793\,2319}\\
\end{document}